\documentclass[11pt]{amsart}

\scrollmode

\usepackage{times}
\usepackage{amsmath,graphicx}
\usepackage{latexsym}
\usepackage{amscd,amsthm,amssymb}
\usepackage[all]{xy}

\setcounter{tocdepth}{1}

\newtheorem{thm}{Theorem}
\newtheorem{prop}[thm]{Proposition}
\newtheorem{lem}[thm]{Lemma}

\theoremstyle{definition}

\newtheorem{rem}[thm]{Remark}
\newtheorem{defn}[thm]{Definition}

\title{On certain exotic 4-manifolds of Akhmedov and Park}
\author{M.~J.~D.~Hamilton}
\address{Mark Hamilton \\
               Universit\"at Stuttgart\\
               Fachbereich Mathematik\\
               Pfaffenwaldring 57\\
               70569 Stuttgart\\
               Germany}
\email{mark.hamilton@math.lmu.de}
\date{February 22, 2011}
\subjclass[2000]{57R55, 57R19, 57R17}
\keywords{fibre sum, 4-manifold, symplectic, canonical class}

\begin{document}

\begin{abstract} In an article from 2008, A.~Akhmedov and B.~D.~Park constructed irreducible symplectic 4-manifolds homeomorphic but not diffeomorphic to the manifolds $\mathbb{C}P^2\#3\overline{\mathbb{C}P^2}$ and $3\mathbb{C}P^2\#5\overline{\mathbb{C}P^2}$. These manifolds are constructed by using generalized fibre sums. In this note we describe an explicit splitting of the second (co-)homology of these manifolds adapted to their construction as fibre sums. We also calculate the canonical classes of the symplectic structures. This gives a new proof for a formula derived by A.~Akhmedov, R.~\.I.~Baykur and B.~D.~Park.
\end{abstract}

\maketitle

\tableofcontents

\section{Introduction} In their article \cite{AkP}, A.~Akhmedov and B.~D.~Park constructed simply-connected irreducible symplectic 4-manifolds $U$ and $V$ homeomorphic to $\mathbb{C}P^2\#3\overline{\mathbb{C}P^2}$ and $3\mathbb{C}P^2\#5\overline{\mathbb{C}P^2}$. In particular, being irreducible under connected sum, $U$ and $V$ are not diffeomorphic to these manifolds and hence determine exotic smooth structures. The construction of these manifolds uses the generalized fibre sum, that we will recall in Section \ref{sect Generalized fibre sums}. It turns out that, even though the building blocks in this construction have non-trivial fundamental group, the manifolds $U$ and $V$ obtained as multiple fibre sums are simply-connected.  

The second (co-)homology of a generalized fibre sum of two 4-manifolds admits a canonical decomposition, see \cite{H}. In this note we want to describe this splitting for the manifolds $U$ and $V$. This will also allow us to determine the canonical classes of the symplectic structures (a formula for the canonical class has also been derived in a different way in \cite{AkBP}). It is also quite easy to see with these methods that the manifolds $U$ and $V$ have vanishing first integral homology. We will, however, not repeat the difficult part in \cite{AkP} and show that $U$ and $V$ are simply-connected. To understand why the manifolds $U$ and $V$ have vanishing first integral homology without going through the calculation of their fundamental groups was one of the starting points for the following discussion. In general we will see that the calculations on the level of homology are fairly simple.

\section{Generalized fibre sums}\label{sect Generalized fibre sums}
In the following, we use for a topological space $Y$ the abbreviations $H_*(Y)$ and $H^*(Y)$ to denote the homology and cohomology groups of $Y$ with $\mathbb{Z}$-coefficients. The homology class of an embedded, oriented surface and the surface itself are often denoted by the same symbol. Poincar\'e duality is often suppressed, so that a class and its Poincar\'e dual are denoted by the same symbol.
 
Let $M$ and $N$ be smooth, closed, oriented 4-manifolds and $\Sigma_M$ and $\Sigma_N$ closed, oriented, embedded surfaces of the same genus $g$ in $M$ and $N$. We assume that both surfaces represent homology classes of self-intersection zero. Then their normal bundles are trivial. We choose trivializations of their normal bundles corresponding to framings of the tubular neighbourhoods as $\nu\Sigma_M=\Sigma_M\times D^2$ and $\nu\Sigma_N=\Sigma_N\times D^2$. Let $\phi:\Sigma_M\rightarrow \Sigma_N$ be an orientation preserving diffeomorphism. Then the generalized fibre sum of $M$ and $N$ along $\Sigma_M$ and $\Sigma_N$ is defined as  
\begin{equation*}
X=M'\cup_\Phi N'
\end{equation*}
where $M'$ and $N'$ denote the manifolds with boundary $\Sigma_M\times S^1$ and $\Sigma_N\times S^1$ obtained by deleting the interior of the tubular neighbourhoods of the surfaces and $\Phi$ is an orientation reversing diffeomorphism $\Phi:\partial M'\rightarrow \partial N'$ that preserves the $S^1$ fibration, reverses orientation on the circles and covers the diffeomorphism $\phi$ between the surfaces. For example in the fixed framing we can consider the diffeomorphism 
\begin{equation}\label{triv gluing}
\begin{split}
\Phi\colon \Sigma_M\times S^1&\rightarrow \Sigma_N\times S^1,\\
(x,z)&\mapsto (\phi(x),\bar{z}),
\end{split}
\end{equation}
where the bar denotes complex conjugation. In general, the smooth structure of the 4-manifold $X$ can depend on the choice of the covering diffeomorphism $\Phi$. We choose one such covering and denote the resulting manifold by 
\begin{equation*}
X=M\#_\phi N.
\end{equation*}
We can specify the diffeomorphism $\phi:\Sigma_M\rightarrow \Sigma_N$ equivalently by choosing embeddings
\begin{align*}
i_M:\Sigma_g&\rightarrow M\\
i_N:\Sigma_g&\rightarrow N
\end{align*}
that realize the surfaces $\Sigma_M$ and $\Sigma_N$ as images of a fixed closed surface $\Sigma_g$ of genus $g$. Then we choose as $\phi$ the diffeomorphism $i_N\circ (i_M^{-1}|_{\Sigma_M})$. 

If $M$ and $N$ are symplectic manifolds and $\Sigma_M$, $\Sigma_N$ symplectically embedded, then the generalized fibre sum $X$ admits a symplectic structure. See references \cite{Go} and \cite{McW} for the original construction. 

\section{Construction of the exotic $\mathbb{C}P^2\#3\overline{\mathbb{C}P^2}$}
Let $K$ be a knot in $S^3$. Denote a tubular neighbourhood of $K$ by $\nu K\cong S^1\times D^2$. Let $m$ be a fibre of the circle bundle $\partial \nu K\rightarrow K$ and use an oriented Seifert surface for $K$ to define a section $l\colon K\rightarrow\partial \nu K$. The circles $m$ and $l$ are called the {\em meridian} and the {\em longitude} of $K$. Let $M_K$ be the closed 3-manifold obtained by $0$-surgery on $K$, defined as 
\begin{equation*}
M_K=(S^3\setminus \mbox{int}\,\nu K)\cup_f (S^1\times D^2).
\end{equation*}
where the gluing diffeomorphism $f$ maps in homology $\partial D^2$ onto the longitude $l$. One can show that $M_K$ has the same integral homology as $S^2\times S^1$. The meridian $m$, which bounds the fibre in the normal bundle to $K$ in $S^3$, becomes non-zero in the homology of $M_K$ and defines a generator in $H_1(M_K)$. The longitude $l$ is null-homotopic in $M_K$ since it bounds one of the $D^2$-fibres glued in. This copy of $D^2$ determines together with the Seifert surface of $K$ a closed, oriented surface in $M_K$ which intersects $m$ once and generates $H_2(M_K)$.

Let $K$ be the left-handed trefoil knot in $S^3$. Then according to equation (1) in \cite{AkP}, the fundamental group of $M_K$ is given by
\begin{equation*}
\pi_1(M_K)=\langle a,b|aba=bab, a^{-1}b^{-2}a^{-1}b^{4}=1\rangle.
\end{equation*}
Hence in homology we have again $H_1(M_K)=\mathbb{Z}$ generated by $a=b$ which are equal, under appropriate choice of orientation, to the class of the generator $m$. 

The knot $K$ is a genus one fibred knot, hence $M_K$ is a $T^2$-bundle over $S^1$. It has a section $S^1\rightarrow M_K$ whose image is equal to $b$. Consider the 4-manifold $S^1\times M_K$. It is a $T^2$-bundle over $T^2$ with section $S=x\times b$, where $x$ denotes the $S^1$-factor. In particular, the inclusion induced map $H_1(S)\rightarrow H_1(S^1\times M_K)$ is an isomorphism. Let $F$ denote a fibre of this bundle. Then according to \cite{AkP} we can write $F=\gamma_1\times \gamma_2$ where the curves $\gamma_1$ and $\gamma_2$ are homotopic to $a^{-1}b$ and $b^{-1}aba^{-1}$. In homology, both $\gamma_1$ and $\gamma_2$ vanish, hence the inclusion induced homomorphism $H_1(F)\rightarrow H_1(S^1\times M_K)$ is the zero map. Both $S$ and $F$ are embedded tori of self-intersection zero. We have $H_2(S^1\times M_K)=\mathbb{Z}^2$, generated by $S$ and $F$. The intersection form is given in this basis by 
\begin{equation*}
Q_{S^1\times M_K}=\left(\begin{array}{cc} 0 &1 \\ 1& 0 \\ \end{array}\right).
\end{equation*}
  
By a construction of W.~P.~Thurston \cite{Th}, the manifold $S^1\times M_K$ admits a symplectic structure such that both $S$ and $F$ are symplectic submanifolds. 
\begin{lem}\label{lem K knot surgery} The canonical class of the symplectic manifold $S^1\times M_K$ is given by $K_{S^1\times M_K}=0$.
\end{lem}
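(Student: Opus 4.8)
The plan is to determine the canonical class using the adjunction formula for symplectic surfaces together with the explicit description of $H_2(S^1\times M_K)$ recorded above. Since $H_2(S^1\times M_K)=\mathbb{Z}^2$ is freely generated by $S$ and $F$, Poincar\'e duality lets me write $K_{S^1\times M_K}=aS+bF$ for integers $a,b$, so the task reduces to showing $a=b=0$.

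The key step is to evaluate $K_{S^1\times M_K}$ on the two generating tori. Recall that for a symplectically embedded surface $\Sigma$ in a symplectic $4$-manifold $(X,\omega)$ one may choose an $\omega$-compatible almost complex structure for which $\Sigma$ is pseudoholomorphic; the complex splitting $TX|_\Sigma=T\Sigma\oplus\nu\Sigma$ then yields the adjunction identity $K_X\cdot\Sigma=2g(\Sigma)-2-\Sigma\cdot\Sigma$. By Thurston's construction both $S$ and $F$ are symplectic submanifolds, and each is a torus of self-intersection zero, so adjunction gives $K_{S^1\times M_K}\cdot S=0$ and $K_{S^1\times M_K}\cdot F=0$. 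Reading these pairings off from the intersection form $Q_{S^1\times M_K}$ computed above, for which $S\cdot S=F\cdot F=0$ and $S\cdot F=1$, we obtain $K_{S^1\times M_K}\cdot S=b$ and $K_{S^1\times M_K}\cdot F=a$, whence $a=b=0$ and $K_{S^1\times M_K}=0$.

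I do not expect a genuine obstacle. The only point that might look delicate is that $S^1\times M_K$ has $b_1>0$, but the version of the adjunction formula used here is a purely cohomological consequence of the complex splitting of $TX$ along a $J$-holomorphic curve and requires no gauge theory, so it applies verbatim. As a consistency check one may note that the real tangent bundle of $S^1\times M_K$ is trivial, being the sum of a trivial line bundle and the tangent bundle of the parallelisable $3$-manifold $M_K$; in particular $K_{S^1\times M_K}^2=2\chi(S^1\times M_K)+3\sigma(S^1\times M_K)=0$, which forces $ab=0$, while the adjunction computation above is what kills both coefficients individually.
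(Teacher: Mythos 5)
Your argument is correct and is exactly the paper's proof, just written out in full: the paper likewise applies the adjunction formula $2g-2=\Sigma^2+K\cdot\Sigma$ to the two symplectic generators $S$ and $F$ of $H_2(S^1\times M_K)$ and uses the hyperbolic intersection form to conclude $K=0$. The extra details you supply (why adjunction holds for symplectic surfaces without gauge theory, and the $c_1^2=2\chi+3\sigma$ consistency check) are sound but not needed beyond what the paper already asserts.
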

\begin{proof} This follows from the adjunction formula
\begin{equation*}
2g-2=\Sigma_g^2+K\Sigma_g,
\end{equation*}
since both generators $S$ and $F$ are symplectic.
\end{proof}

Consider two copies of $S^1\times M_K$ and embeddings
\begin{align*}
i_1:T^2&\rightarrow S^1\times M_K\\
i_2:T^2&\rightarrow S^1\times M_K
\end{align*}
whose images are $S$ and $F$ and which map the standard generators of $\pi_1(T^2)$ to $\{x,b\}$ and $\{\gamma_1,\gamma_2\}$, respectively.  
\begin{defn} Let $Y_K$ denote the symplectic fibre sum $(S^1\times M_K)\#_\psi (S^1\times M_K)$, where the diffeomorphism $\psi:S\rightarrow F$ is given by $i_2\circ i_1^{-1}$. 
\end{defn}
In other words \cite{FSknot}, $Y_K$ is obtained by knot surgery with the left-handed trefoil knot on $S^1\times M_K$ along the fibre $F$. It is known that knot surgery does not change the integral homology groups and the intersection form on $H_2$ and that rim tori do not exist in the knot surgered manifold (this also follows with the methods in \cite{H}). Hence we have:
\begin{lem} $H_1(Y_K)=\mathbb{Z}^2$ and $H_2(Y_K)=\mathbb{Z}^2$. 
\end{lem}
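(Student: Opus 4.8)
The plan is to deduce both isomorphisms from the fact that knot surgery leaves the integral homology unchanged, and then to indicate how the same conclusion can be obtained from the Mayer--Vietoris sequence of the fibre sum, in the spirit of \cite{H}.

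For the first route I would argue as follows. By construction $Y_K$ is knot surgery on $Z:=S^1\times M_K$ along the torus $F$ with the left-handed trefoil, so $Y_K=(Z\setminus\nu F)\cup_{T^3}\big((S^3\setminus\nu K)\times S^1\big)$, the gluing being the standard one of \cite{FSknot}. The knot exterior $S^3\setminus\nu K$ is a homology circle, so $(S^3\setminus\nu K)\times S^1$ is a homology $T^2$, just like $\nu F=T^2\times D^2$, and the two pieces have matching peripheral data; hence $H_*(Y_K)\cong H_*(Z)$. Since $H_*(M_K)=H_*(S^2\times S^1)$, the Künneth formula gives $H_1(Z)=\mathbb Z^2$ and $H_2(Z)=\mathbb Z^2$, which is the assertion.

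For the second route, staying with the fibre-sum description of Section~\ref{sect Generalized fibre sums}, I would write $Y_K=A'\cup_{T^3}B'$ with $A'=Z\setminus\nu S$ and $B'=Z\setminus\nu F$, glued along $T^3$ by the covering diffeomorphism. Here one exploits that $S=x\times b$ is a \emph{section} of the torus bundle $Z\to T^2$ whereas $F$ is a \emph{fibre}: thus $A'=S^1\times(M_K\setminus\nu b)$ is the product of $S^1$ with a once-punctured-torus bundle over $S^1$, and $B'$ is a $T^2$-bundle over the once-punctured torus. The homology of both complements is read off from Wang sequences, the essential input being that the trefoil monodromy $h$ satisfies $\det(1-h_*)=\Delta_K(1)=\pm1$ on the first homology of the fibre, which kills the relevant coinvariants and kernels and makes $H_*(A')$ and $H_*(B')$ torsion-free of small rank. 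One then identifies the restriction maps $H_*(T^3)\to H_*(A')$ and $H_*(T^3)\to H_*(B')$: the decisive features are that $H_1(S)\to H_1(Z)$ is an isomorphism while $H_1(F)\to H_1(Z)$ is zero (both recorded above) and that the meridian of $F$ is null-homologous in $B'$. Plugging this into the Mayer--Vietoris sequence for $Y_K$ yields $H_1(Y_K)=\mathbb Z^2$ and $H_2(Y_K)=\mathbb Z^2$; in the latter, one $\mathbb Z$-summand comes from $H_2(B')$, the other from the meridian class spanning $\ker\big(H_1(T^3)\to H_1(A')\oplus H_1(B')\big)$.

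The delicate point in the second route is the bookkeeping of the covering diffeomorphism on $T^3$: one must check that it matches meridians and longitudes so that the fibre sum creates no new first homology and, in the language of \cite{H}, that the rim-tori and split-class contributions to $H_2(Y_K)$ vanish and no torsion appears. The identity $\Delta_K(1)=\pm1$ is what makes this work out; everything else is diagram chasing. The first route sidesteps all of this by quoting the standard homological behaviour of knot surgery.
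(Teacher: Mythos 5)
Your first route is exactly the paper's proof: the paper simply quotes that knot surgery leaves the integral homology (and the intersection form on $H_2$) unchanged and reads off $H_1(S^1\times M_K)=H_2(S^1\times M_K)=\mathbb{Z}^2$ from $H_*(M_K)\cong H_*(S^2\times S^1)$ and K\"unneth, just as you do. Your second, Mayer--Vietoris route is the alternative the paper alludes to in its parenthetical remark that the claim ``also follows with the methods in \cite{H}'', and the facts you invoke there (the section/fibre dichotomy for $H_1(S),H_1(F)\to H_1(S^1\times M_K)$ and $\Delta_K(1)=\pm1$) are the correct inputs, so the proposal is sound.
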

The calculation of the first homology of $Y_K$ of course also follows from the calculation of its fundamental group in \cite{AkP}.

The generators of $H_2(Y_K)$ can be described as follows: We can consider push-offs of the surface $S$ in the first copy and $F$ in the second copy of $S^1\times M_K$ into the boundary of their tubular neighbourhoods. If we choose the gluing diffeomorphism as in equation \eqref{triv gluing}, then both push-offs get identified to a torus $T_{Y_K}$ of self-intersection zero inside $Y_K$. This is one generator of $H_2(Y_K)$. The second generator is a surface $\Sigma$ of genus $2$ and self-intersection zero in $Y_K$ obtained by sewing together a punctured fibre from the first copy and a punctured section from the second copy of $S^1\times M_K$. By the Gompf construction \cite{Go} we can assume that $\Sigma$ is symplectic. The intersection form on the generators $T_{Y_K}$ and $\Sigma$ is given by
\begin{equation*}
Q_{Y_K}=\left(\begin{array}{cc} 0 &1 \\ 1& 0 \\ \end{array}\right).
\end{equation*}
\begin{lem}\label{lem K_{Y_K}} The canonical class of the symplectic manifold $Y_K$ is given by $K_{Y_K}=2T_{Y_K}$.
\end{lem}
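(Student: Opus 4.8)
The plan is to determine $K_{Y_K}$ by evaluating it against the two generators $T_{Y_K}$ and $\Sigma$ of $H_2(Y_K)$ and applying the adjunction formula, exactly as in the proof of Lemma~\ref{lem K knot surgery}. Since $H_1(Y_K)=\mathbb{Z}^2$ is torsion free, so is $H^2(Y_K;\mathbb{Z})$, and because the intersection form $Q_{Y_K}$ is unimodular, Poincar\'e duality identifies $K_{Y_K}$ with an integral homology class. Hence I can write $K_{Y_K}=a\,T_{Y_K}+b\,\Sigma$ for unique integers $a,b$, and it remains to compute the two products $K_{Y_K}\cdot T_{Y_K}$ and $K_{Y_K}\cdot\Sigma$, which by $Q_{Y_K}$ equal $b$ and $a$ respectively.

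The torus $T_{Y_K}$ comes from a parallel push-off of the symplectic surface $F$ (equivalently $S$) into the boundary of its tubular neighbourhood; a push-off of a symplectically embedded surface of self-intersection zero survives the Gompf fibre sum as a symplectic submanifold, so $T_{Y_K}$ is a symplectic torus of self-intersection zero. The adjunction formula $2g-2=\Sigma_g^2+K\Sigma_g$ then gives $K_{Y_K}\cdot T_{Y_K}=0$, so $b=0$. Similarly $\Sigma$ is symplectic (Gompf), of genus $2$ and self-intersection zero, so the adjunction formula gives $K_{Y_K}\cdot\Sigma=2$, so $a=2$. Therefore $K_{Y_K}=2\,T_{Y_K}$.

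I expect the only genuinely delicate point --- though still routine --- to be the verification that the push-off $T_{Y_K}$ is symplectic in $Y_K$ (for $\Sigma$ this was already built into the construction via Gompf's theorem). One can also sidestep this by appealing directly to the general description of the canonical class of a symplectic fibre sum in \cite{H}: since $K_{S^1\times M_K}=0$ for both summands by Lemma~\ref{lem K knot surgery}, and each of the surfaces $S$ and $F$ along which one sums is represented in $Y_K$ by the class $T_{Y_K}$, that description forces $K_{Y_K}=2\,T_{Y_K}$, in agreement with the computation above.
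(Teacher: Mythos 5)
Your proposal is correct and follows essentially the same route as the paper, which likewise deduces $K_{Y_K}=2T_{Y_K}$ by applying the adjunction formula to the two symplectic generators $T_{Y_K}$ and $\Sigma$ of $H_2(Y_K)$ and using the intersection form $Q_{Y_K}$. Your extra remarks---justifying that the push-off $T_{Y_K}$ remains symplectic in the Gompf sum and noting the alternative via the general canonical class formula of \cite{H}---are accurate elaborations of points the paper leaves implicit.
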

\begin{proof} This follows from the adjunction formula since both generators $T_{Y_K}$ and $\Sigma$ are symplectic.
\end{proof}
We can also describe the inclusion induced map $H_1(\Sigma)\rightarrow H_1(Y_K)$. Consider the following part of the Mayer-Vietoris sequence for $Y_K$:
\begin{equation*}
\ldots\rightarrow H_1(T^2\times S^1)\rightarrow H_1(S^1\times M_K\setminus \nu S)\oplus H_1(S^1\times M_K\setminus \nu F)\rightarrow H_1(Y_K)\rightarrow 0.
\end{equation*}
In $S^1\times M_K\setminus \nu S$ we have the punctured fibre and in $S^1\times M_K\setminus \nu F$ the punctured section which sew together to define the surface $\Sigma$. Since $S\cdot F=1$, both the section and the fibre represent indivisible elements in homology. This implies that the meridians to these surfaces are zero in the homology of the complements of the tubular neighbourhoods and we have isomorphisms
\begin{equation*}
H_1(S^1\times M_K\setminus \nu S)\cong H_1(S^1\times M_K\setminus \nu F)\cong H_1(S^1\times M_K).
\end{equation*}
The Mayer-Vietoris sequence reduces to
\begin{equation*}
H_1(T^2)\stackrel{i_1\oplus i_2}{\longrightarrow}H_1(S^1\times M_K)\oplus H_1(S^1\times M_K)\rightarrow H_1(Y_K)\rightarrow 0.
\end{equation*}
Hence $H_1(Y_K)$ is isomorphic to the cokernel of $i_1\oplus i_2$. The map $i_1$ on homology is an isomorphism, whereas the map $i_2$ is the zero map. It follows that the inclusion maps the generators of the punctured section to the generators of $H_1(Y_K)$ and the generators of the punctured fibre to zero. In the notation of \cite{AkP}, the group $H_1(Y_K)$ has generators $y,d$ and the inclusion maps the standard generators of $H_1(\Sigma)$ to $\{y,d,0,0\}$ in that particular order.

The manifold $Y_K$ is the first building block for $U$. The second building block is the manifold $Q=(S^1\times M_K)\#2\overline{\mathbb{C}P^2}$. In $Q$ there is a symplectic surface $\Sigma'$ of genus 2 and self-intersection zero, obtained by symplectically resolving the intersection point of a torus fibre $F$ and a torus section $S$ in $S^1\times M_K$ and then blowing up at two points.

Let $h,z$ denote the generators of $H_1(Q)=H_1(S^1\times M_K)=\mathbb{Z}^2$ corresponding to the generators $b,x$ we had previously. Then the inclusion maps the standard generators of $H_1(\Sigma')$ to $\{z,h,0,0\}$ in that particular order. Choose embeddings of a reference surface of genus $2$
\begin{align*}
i_{Y_K}:\Sigma_2&\rightarrow Y_K\\
i_Q:\Sigma_2&\rightarrow Q
\end{align*}
whose images are $\Sigma$ and $\Sigma'$ and which map the standard generators of $H_1(\Sigma_2)$ to $\{y,d,0,0\}$ and $\{0,0,z,h\}$, respectively.  
\begin{defn} Let $U$ denote the fibre sum $Y_K\#_\phi Q$, where the diffeomorphism $\phi:\Sigma\rightarrow \Sigma'$ is given by $i_Q\circ i_{Y_K}^{-1}$. 
\end{defn}

\begin{prop} Rim tori do not exist in the fibre sum $U$. The 4-manifold $U$ is a homology $\mathbb{C}P^2\#3\overline{\mathbb{C}P^2}$.
\end{prop}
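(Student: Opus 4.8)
The plan is to feed the construction data into the decomposition of the second homology of a generalized fibre sum from \cite{H}, after which the statement becomes bookkeeping with the intersection forms of the two building blocks. First I would identify the first homology of the two complements. The surface $\Sigma\subset Y_K$ carries the dual class $T_{Y_K}$ with $T_{Y_K}\cdot\Sigma=1$, and in $Q$ the proper transform of the section satisfies $[S]\cdot[\Sigma']=[S]\cdot[S]+[S]\cdot[F]=1$, since $[\Sigma']=[S]+[F]-E_1-E_2$; hence both $\Sigma$ and $\Sigma'$ admit dual classes, their meridians die in the complements, and $H_1(Y_K\setminus\nu\Sigma)\cong H_1(Y_K)=\mathbb{Z}\langle y,d\rangle$, $H_1(Q\setminus\nu\Sigma')\cong H_1(Q)=\mathbb{Z}\langle h,z\rangle$. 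By the choice of $\phi$ the standard generators of $H_1(\Sigma_2)$ map to $\{y,d,0,0\}$ in the first complement and to $\{0,0,z,h\}$ in the second, so the kernels of the two restriction maps are $\mathbb{Z}\langle e_3,e_4\rangle$ and $\mathbb{Z}\langle e_1,e_2\rangle$, and their sum is all of $H_1(\Sigma_2)$. By the rim-tori criterion of \cite{H} this means the rim-tori subgroup of $H_2(U)$ vanishes, i.e. rim tori do not exist in $U$.

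Next I would compute $H_1(U)$ from the Mayer--Vietoris sequence
\[
H_1(\Sigma_2\times S^1)\xrightarrow{\ \alpha\ }H_1(Y_K\setminus\nu\Sigma)\oplus H_1(Q\setminus\nu\Sigma')\longrightarrow H_1(U)\longrightarrow 0 .
\]
The meridian maps to zero on both sides, while $\alpha$ sends $e_1,e_2$ to $(y,0),(d,0)$ and $e_3,e_4$ to $(0,\pm z),(0,\pm h)$; since $y,d$ and $z,h$ generate the respective groups, $\alpha$ is surjective and $H_1(U)=0$. Poincar\'e duality and the universal coefficient theorem then force $H_3(U)=0$ and that $H_2(U)$ is torsion-free, so $H_*(U)$ agrees with $H_*(\mathbb{C}P^2\#3\overline{\mathbb{C}P^2})$ as soon as the rank of $H_2(U)$ is pinned down.

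For that I would use $\chi(U)=\chi(Y_K)+\chi(Q)-2\chi(\Sigma_2)$ and, by Novikov additivity together with $\sigma(\nu\Sigma)=0$, $\sigma(U)=\sigma(Y_K)+\sigma(Q)$. Knot surgery changes neither $\chi$ nor the intersection form, so $\chi(Y_K)=\chi(S^1\times M_K)=0$ and $\sigma(Y_K)=0$; blowing up twice gives $\chi(Q)=2$ and $\sigma(Q)=\sigma(S^1\times M_K)-2=-2$. Hence $\chi(U)=6$, $\sigma(U)=-2$, and with $b_1(U)=0$ this yields $b_2(U)=4$, $b_2^+(U)=1$, $b_2^-(U)=3$. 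The form $Q_U$ is unimodular and indefinite, and it is odd: the class $[F]-E_1\in H_2(Q)$ satisfies $([F]-E_1)\cdot[\Sigma']=0$, hence is carried by a closed surface lying inside $Q\setminus\nu\Sigma'$, giving a class in $H_2(U)$ of square $([F]-E_1)^2=-1$. An indefinite odd unimodular form of rank $4$ and signature $-2$ is isometric to $\langle1\rangle\oplus3\langle-1\rangle$, the intersection form of $\mathbb{C}P^2\#3\overline{\mathbb{C}P^2}$, which finishes the argument.

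The only place carrying real risk of error is carrying out this last part in the form the paper aims for, namely writing the \cite{H}-splitting $H_2(U)=P(Y_K)\oplus P(Q)\oplus\mathbb{Z}\langle\Sigma_U,B_U\rangle$ and evaluating the summands: $P(Y_K)=[\Sigma]^{\perp}/\langle\Sigma\rangle=0$ because $[\Sigma]^{\perp}=\mathbb{Z}\langle\Sigma\rangle$ inside the hyperbolic $H_2(Y_K)$, and $P(Q)=[\Sigma']^{\perp}/\langle[\Sigma']\rangle\cong\langle-1\rangle\oplus\langle-1\rangle$ — the latter is precisely where $[\Sigma']=[S]+[F]-E_1-E_2$ interacts with the exceptional classes and where the odd part of the form comes from — after which one checks that the remaining rank-two block $\langle\Sigma_U,B_U\rangle$ is unimodular of signature $0$. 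This reproves the intersection form directly and supplies exactly the input needed for the companion computation of the canonical class; getting a sign or a coefficient wrong in the $[\Sigma']$-bookkeeping is the main pitfall, and everything outside it is routine.
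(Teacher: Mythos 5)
Your proposal is correct and follows essentially the same route as the paper: where the paper simply cites Corollary 45, Theorem 51 and Corollary 40 of \cite{H} for the cokernel description of $H_1(U)$, the rim-tori criterion, and the Betti numbers, you unpack these via Mayer--Vietoris, Euler characteristic and Novikov additivity, and additionally identify the intersection form explicitly (the paper stops at $b_2^\pm$, which already determines the odd indefinite form since an even unimodular form cannot have signature $-2$). One small caution: the rim-tori group is the cokernel of $i_{Y_K}^*+i_Q^*$ acting on $H^1(\Sigma_2)$, and your reformulation ``the kernels of the two restriction maps on $H_1(\Sigma_2)$ span everything'' is not equivalent to its vanishing for arbitrary maps (a map like multiplication by $2$ on a factor has zero kernel but non-surjective dual), though here both inclusions are projections onto complementary coordinate subgroups, so the dual images do span $H^1(\Sigma_2)$ and your conclusion stands.
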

\begin{proof} We use the results from \cite{H}. According to \cite[Corollary 45]{H}, $H_1(U)$ is isomorphic to the cokernel of the map $i_{Y_K}\oplus i_Q:H_1(\Sigma_2)\rightarrow H_1(Y_K)\oplus H_1(Q)$. Since this map is an isomorphism, $H_1(U)=0$. Similarly, according to \cite[Theorem 51]{H}, the subgroup of rim tori in the second homology of $U$ is isomorphic to the cokernel of the map $i_{Y_K}^*+ i_Q^*:H^1(Y_K)\oplus H^1(Q)\rightarrow H^1(\Sigma_2)$. Since this map is also an isomorphism, rim tori do not occur in the 4-manifold $U$. Finally, the formulae in \cite[Corollary 40]{H} show that $b_2^+(U)=1$ and $b_2^-(U)=3$. 
\end{proof} 
The 4-manifold $U$ is symplectic, since the surfaces $\Sigma$ and $\Sigma'$ are symplectically embedded. In \cite{AkP} the gluing diffeomorphism $\phi$ is specified on the level of fundamental groups and it is shown that $U$ is simply-connected and irreducible. Hence the manifold $U$ is an exotic $\mathbb{C}P^2\#3\overline{\mathbb{C}P^2}$.

We now describe the splitting of $H_2(U)$ adapted to the fibre sum. We decompose the second homology of the manifold $Q$ as
\begin{equation*}
H_2(Q)=\mathbb{Z}\Sigma'\oplus \mathbb{Z}B_Q\oplus P(Q),
\end{equation*}
where $B_Q$ is a surface in $Q$ with $\Sigma'\cdot B_Q=1$ and $P(Q)$ denotes the orthogonal complement of $\mathbb{Z}\Sigma'\oplus \mathbb{Z}B_Q$ with respect to the intersection form. The direct sum decomposition holds, because the intersection form is unimodular on the subgroup $\mathbb{Z}\Sigma'\oplus \mathbb{Z}B_Q$, see \cite[Lemma 1.2.12]{GS}. Similarly, we have a decomposition
\begin{equation*}
H_2(Y_K)=\mathbb{Z}\Sigma\oplus \mathbb{Z}T_{Y_K}.
\end{equation*}
In this case the subgroup $P(Y_K)$ is zero. The push-offs of the
surfaces $\Sigma$ and $\Sigma'$ determine a surface $\Sigma_U$ in $U$
of genus 2 and self-intersection $0$. The punctured surfaces $B_Q$ and
$T_{Y_K}$ sew together to define a surface $B_U$ of genus equal to the
genus of $B_Q$ plus one. The surface $B_U$ has self-intersection
$B_Q^2$ since the torus $T_{Y_K}$ has self-intersection $0$. Since rim
tori and the dual vanishing (or {\it split}) classes do not exist in $U$, \cite[Theorem 59]{H} shows that
\begin{equation*}
H_2(U)= \mathbb{Z}\Sigma_U\oplus \mathbb{Z}B_U\oplus P(Q).
\end{equation*}
The subgroup $P(Q)$ is orthogonal to the first two summands. The restriction of the intersection form to $\mathbb{Z}\Sigma_U\oplus \mathbb{Z}B_U$ is of the form
\begin{equation*}
\left(\begin{array}{cc} 0 &1 \\ 1& B_Q^2 \\ \end{array}\right)
\end{equation*}
and the intersection form on $P(Q)$ is the one induced from $Q$. Note that there is an isomorphism $H_2(U)\cong H_2(Q)$ preserving the intersection form obtained by mapping $\Sigma'$ to $\Sigma_U$, $B_Q$ to $B_U$ and the identity on $P(Q)$. 

We now determine the canonical class of $U$, which depends on the choice of the surface $B_Q$.
\begin{prop} Let $E_1,E_2$ denote the exceptional spheres in $Q$. Then the canonical classes of the symplectic 4-manifolds $Q$ and $U$ are given by
\begin{equation*}
K_Q=E_1+E_2
\end{equation*}
and 
\begin{equation*}
K_U=(2+K_QB_Q-2B_Q^2)\Sigma_U+2B_U+(K_Q-2B_Q-(K_QB_Q-2B_Q^2)\Sigma').
\end{equation*}
In the formula for $K_U$ the term $K_Q-2B_Q-(K_QB_Q-2B_Q^2)\Sigma'$ is an element of $P(Q)$.
\end{prop}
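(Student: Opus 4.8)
The plan is to determine $K_Q$ from the blow-up formula and then to obtain $K_U$ by computing its intersection numbers with the three summands of $H_2(U)=\mathbb{Z}\Sigma_U\oplus\mathbb{Z}B_U\oplus P(Q)$, whose intersection form we already know. For $K_Q$: since $Q=(S^1\times M_K)\#2\overline{\mathbb{C}P^2}$ is a symplectic blow-up at two points, I would apply the blow-up formula $K_{Z\#\overline{\mathbb{C}P^2}}=\pi^{*}K_Z+E$ twice, starting from $K_{S^1\times M_K}=0$ of Lemma~\ref{lem K knot surgery}, to get $K_Q=E_1+E_2$. As a consistency check, $\Sigma'$ is the proper transform of the surface obtained by resolving $F\cap S$, so it meets each $E_i$ once and $K_Q\cdot\Sigma'=2$, which agrees with the adjunction formula for the symplectic genus-$2$ surface $\Sigma'$ of square $0$.

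Next I would write $K_U=a\Sigma_U+bB_U+p$ with $p\in P(Q)$, which is legitimate by the decomposition $H_2(U)=\mathbb{Z}\Sigma_U\oplus\mathbb{Z}B_U\oplus P(Q)$. From $\Sigma_U^{2}=0$, $\Sigma_U\cdot B_U=1$, $B_U^{2}=B_Q^{2}$ and the orthogonality of $P(Q)$ to the first two summands, one recovers $b=K_U\cdot\Sigma_U$, $a=K_U\cdot B_U-bB_Q^{2}$, and $p$ as the unique element of $P(Q)$ satisfying $p\cdot q=K_U\cdot q$ for all $q\in P(Q)$ (here the intersection form is unimodular on $P(Q)$, being the orthogonal complement of a unimodular sublattice of the unimodular lattice $H_2(Q)$). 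Now $\Sigma_U$ is symplectic of genus $2$ and square $0$, so adjunction gives $b=K_U\cdot\Sigma_U=2$. For the $P(Q)$-part I would use that Gompf's construction changes the symplectic form of $Q$ only in a neighbourhood of $\Sigma'$, so $\omega_U$ and $\omega_Q$ agree on $Q'=Q\setminus\nu\Sigma'$ and hence $K_U$ restricts to $K_Q$ on $Q'\subset U$; as every $q\in P(Q)$ is represented by a surface lying in $Q'$, this gives $K_U\cdot q=K_Q\cdot q$, so $p$ is the orthogonal projection of $K_Q$ onto $P(Q)$. Writing $K_Q=(K_QB_Q-2B_Q^{2})\Sigma'+2B_Q+K_Q^{P}$ (the coefficients being forced by pairing with $B_Q$ and $\Sigma'$), this gives $p=K_Q^{P}=K_Q-2B_Q-(K_QB_Q-2B_Q^{2})\Sigma'$, which by this very computation lies in $P(Q)$.

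It remains to compute $K_U\cdot B_U$, and I expect this to be the main obstacle. I would first take a symplectic auxiliary surface $B_0=E_1$ (an exceptional sphere meets $\Sigma'$ once): then its sewn surface, say $B_{0,U}$, is symplectic by Gompf's construction and has genus $g(B_0)+1$, so adjunction in $U$ and in $Q$ give $K_U\cdot B_{0,U}=2g(B_{0,U})-2-B_{0,U}^{2}=\bigl(2g(B_0)-2-B_0^{2}\bigr)+2=K_Q\cdot B_0+2$. For an arbitrary choice, write $B_Q=B_0+n\Sigma'+q$ with $n\in\mathbb{Z}$, $q\in P(Q)$; the corresponding sewn surface then equals $B_{0,U}+n\Sigma_U+q$ in $H_2(U)$, and combining this with $K_U\cdot\Sigma_U=2=K_Q\cdot\Sigma'$ and $K_U\cdot q=K_Q\cdot q$ from the previous step shows $K_U\cdot B_U=K_Q\cdot B_Q+2$ in general. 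Equivalently this is the fibre-sum formula $K_U\cdot B_U=K_{Y_K}\cdot T_{Y_K}+K_Q\cdot B_Q+2$ with $K_{Y_K}\cdot T_{Y_K}=2T_{Y_K}^{2}=0$, using $K_{Y_K}=2T_{Y_K}$. Hence $a=K_U\cdot B_U-2B_Q^{2}=2+K_QB_Q-2B_Q^{2}$, and assembling $a$, $b=2$ and $p=K_Q^{P}$ yields the asserted formula for $K_U$. The hard part is genuinely the extra $+2$ in $K_U\cdot B_U=K_Q\cdot B_Q+2$: it is the real contribution of the fibre sum to the canonical class — already visible in the building block, where $K_{Y_K}=2T_{Y_K}\neq0$ although $K_{S^1\times M_K}=0$ — and to justify it one needs either the adjunction-plus-linearity argument above, which relies on a symplectic surface hitting $\Sigma'$ transversally once, or a direct analysis of $c_1(TU)$ over the neck $\Sigma_2\times S^1$ in which the correction is bookkept by $\chi(B_U)=\chi(B_Q)+\chi(T_{Y_K})-2$. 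Everything else is routine manipulation in the lattice $H_2(Q)$.
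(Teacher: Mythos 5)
Your proof is correct, but it takes a genuinely different route from the paper. The paper's proof is essentially a one-line application of the general canonical class formula for generalized fibre sums, \cite[Theorem 89]{H}: one substitutes $g=2$, $K_{Y_K}=2T_{Y_K}$, $T_{Y_K}^2=0$ into $K_U=\overline{K_Q}+b_UB_U+(\eta_U+\eta'_U)\Sigma_U$ and reads off the stated expression (the $K_Q$ computation is the same in both arguments). You instead re-derive this special case of the theorem from first principles: adjunction on the symplectic surfaces $\Sigma_U$ and a sewn auxiliary surface built from $E_1$, the fact that $c_1$ of the fibre sum restricts to $c_1(Q)$ on $Q\setminus\nu\Sigma'$, and linear algebra in the unimodular splitting $\mathbb{Z}\Sigma_U\oplus\mathbb{Z}B_U\oplus P(Q)$. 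What your approach buys is transparency -- it isolates the origin of each term, in particular the ``$+2$'' in $K_U\cdot B_U=K_Q\cdot B_Q+2$ as the Euler-characteristic cost of the neck, and exhibits the $P(Q)$-component as the orthogonal projection of $K_Q$; what the paper's approach buys is brevity and generality, since \cite[Theorem 89]{H} also covers the cases with rim tori (as in the final Remark), where your argument would miss the rim-torus contribution. Three of your steps silently use facts that deserve a word of justification: that every class in $P(Q)$ is represented by a surface in $Q\setminus\nu\Sigma'$ (this follows from the exact sequence of the pair $(Q,Q\setminus\nu\Sigma')$, since such classes have zero intersection with $\Sigma'$); that compatible almost complex structures on $U$ and $Q$ can be chosen to agree on $Q\setminus\nu\Sigma'$ (true for the Gompf construction); and that the sewn surface of $E_1$ corresponds to $E_1$ under the isomorphism $H_2(U)\cong H_2(Q)$, which is exactly where the absence of rim tori and vanishing classes, i.e.\ \cite[Theorem 59]{H}, enters. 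None of these is a gap, but they are the load-bearing points of your argument.
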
 
\begin{proof} The formula for $K_Q$ follows from Lemma \ref{lem K knot surgery} and the adjunction formula for the exceptional spheres. According to \cite[Theorem 89]{H}, the canonical class of $U$ is given by
\begin{equation*}
K_U=\overline{K_Q}+b_UB_U+(\eta_U+\eta'_U)\Sigma_U,
\end{equation*}
where
\begin{align*}
\overline{K_Q}&=K_Q-(2g-2)B_Q-(K_QB_Q-(2g-2)B_Q^2)\Sigma'\in P(Q)\\
b_U&=2g-2\\
\eta_U&=K_{Y_K}T_{Y_K}+1-(2g-2)T_{Y_K}^2\\
\eta'_U&=K_QB_Q+1-(2g-2)B_Q^2.
\end{align*}
In our case, $g=2$ and $K_{Y_K}$ is given by Lemma \ref{lem K_{Y_K}}.
\end{proof}
For instance, we can choose as $B_Q$ the section $S$ or the fibre $F$ in $S^1\times M_K$. In both cases $B_Q^2=0$ and $K_QB_Q=0$, hence
\begin{equation*}
K_U=2\Sigma_U+2B_U+(K_Q-2B_Q).
\end{equation*}
With the formula for the intersection form of $U$ it follows that $K_U^2=6$, as expected from the formula $K_U^2=2e(U)+3\sigma(U)$.

\section{Construction of the exotic $3\mathbb{C}P^2\#5\overline{\mathbb{C}P^2}$}

The first building block for the exotic 4-manifold $V$ is $R=T^4\#2\overline{\mathbb{C}P^2}$. Fix a factorization $T^4=T^2\times T^2$ and choose a symplectic structure on $T^4$ such that both tori are symplectically embedded. Symplectically resolving the intersection point of the two tori and blowing up twice we obtain a symplectic surface $\Sigma''$ of genus 2 and self-intersection zero in $R$. 

Let $\alpha_i$, $i=1,\ldots,4$, denote the generator of $H_1(R)=H_1(T^4)$ corresponding to $i$-th circle factor. Then the inclusion maps the standard generators of $H_1(\Sigma'')$ to $\{\alpha_1,\alpha_2,\alpha_3,\alpha_4\}$ in that particular order. In particular, the inclusion induced map $H_1(\Sigma'')\rightarrow H_1(R)$ is an isomorphism.  

To describe the second building block of the manifold $V$, we consider two copies of the manifold $Y_K$ constructed above. Recall that in $Y_K$ there is a symplectic surface $\Sigma$ of genus 2 and self-intersection zero. If $y,d$ denote the generators of $H_1(Y_K)$ then the inclusion maps the standard generators of $H_1(\Sigma)$ to $\{y,d,0,0\}$. Let $t,s$ denote generators of the second copy of $Y_K$ corresponding to $y,d$. Choose embeddings of a reference surface of genus $2$
\begin{align*}
i_{Y_{K1}}:\Sigma_2&\rightarrow Y_K\\
i_{Y_{K2}}:\Sigma_2&\rightarrow Y_K
\end{align*}
whose images are the surfaces $\Sigma$ in the first and second copy of $Y_K$ and which map the standard generators of $H_1(\Sigma_2)$ to $\{y,d,0,0\}$ and $\{0,0,t,s\}$, respectively.  
\begin{defn} Let $X_K$ denote the symplectic fibre sum $Y_K\#_\psi Y_K$, where the diffeomorphism $\psi:\Sigma\rightarrow \Sigma$ is given by $i_{Y_{K2}}\circ i_{Y_{K1}}^{-1}$. 
\end{defn}
\begin{lem} Rim tori do not exist in the fibre sum $X_K$. We have $H_1(X_K)=0$ and $H_2(X_K)=\mathbb{Z}^2$.
\end{lem}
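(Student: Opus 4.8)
The plan is to follow the proof of the Proposition on $U$ almost verbatim, feeding the description of $Y_K$ obtained above into the structural results of \cite{H}. Recall that the embeddings $i_{Y_{K1}},i_{Y_{K2}}\colon\Sigma_2\to Y_K$ were chosen so that $i_{Y_{K1}}$ sends the standard generators of $H_1(\Sigma_2)$ to $\{y,d,0,0\}$ and $i_{Y_{K2}}$ sends them to $\{0,0,t,s\}$. First I would compute $H_1$: by \cite[Corollary 45]{H}, $H_1(X_K)$ is the cokernel of the homomorphism
\[
i_{Y_{K1}}\oplus i_{Y_{K2}}\colon H_1(\Sigma_2)\longrightarrow H_1(Y_K)\oplus H_1(Y_K),\qquad (n_1,n_2,n_3,n_4)\longmapsto(n_1y+n_2d,\ n_3t+n_4s).
\]
Since $\{y,d\}$ and $\{t,s\}$ are bases of the two copies of $H_1(Y_K)\cong\mathbb Z^2$, this is an isomorphism $\mathbb Z^4\to\mathbb Z^4$, so $H_1(X_K)=0$.

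Next I would rule out rim tori using \cite[Theorem 51]{H}, according to which the subgroup of rim tori in $H_2(X_K)$ is the cokernel of $i_{Y_{K1}}^{*}+i_{Y_{K2}}^{*}\colon H^1(Y_K)\oplus H^1(Y_K)\to H^1(\Sigma_2)$. As $H_1(Y_K)$ and $H_1(\Sigma_2)$ are free abelian, this homomorphism is, under the canonical identification of $H^1$ with the dual of $H_1$, the transpose of the map above, hence again an isomorphism; its cokernel vanishes, so $X_K$ contains no rim tori (and therefore no split classes either).

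Finally I would identify $H_2(X_K)$. With rim tori and split classes absent, \cite[Theorem 59]{H} yields a splitting $H_2(X_K)=\mathbb Z\Sigma_{X_K}\oplus\mathbb Z B_{X_K}\oplus P(Y_K)$, where $\Sigma_{X_K}$ is the common push-off of the two copies of $\Sigma$ and $B_{X_K}$ is sewn from a surface dual to $\Sigma$ in each copy of $Y_K$. Since $Q_{Y_K}$ is already unimodular on $H_2(Y_K)=\mathbb Z\Sigma\oplus\mathbb Z T_{Y_K}$, the orthogonal complement $P(Y_K)$ is zero, and therefore $H_2(X_K)\cong\mathbb Z^2$. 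As a numerical check one can note that $e(Y_K)=0$ and $\sigma(Y_K)=0$, so the Euler-characteristic formula for a generalized fibre sum along a genus-$2$ surface together with Novikov additivity give $e(X_K)=4$ and $\sigma(X_K)=0$; combined with $b_1(X_K)=0$ this forces $b_2(X_K)=2$, and $H_1(X_K)=0$ makes $H_2(X_K)$ torsion-free.

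The argument is routine once the framework of \cite{H} is in place. The only points requiring a little care are checking that the hypotheses of the cited results genuinely apply — above all the identification of the maps $i_{Y_{Kj}}$ on first homology, which is imported from the analysis of $Y_K$ — and the Euler-characteristic bookkeeping, i.e.\ that $e(Y_K)=0$, which follows from $M_K$ having the homology of $S^2\times S^1$ and the invariance of homology under knot surgery. I do not expect a genuine obstacle here.
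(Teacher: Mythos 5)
Your argument matches the paper's proof: both compute $H_1(X_K)$ as the cokernel of $i_{Y_{K1}}\oplus i_{Y_{K2}}$ via \cite[Corollary 45]{H}, rule out rim tori via the dual map and \cite[Theorem 51]{H}, and obtain $b_2(X_K)=2$ from the structural results of \cite{H} (the paper cites Corollary 40 where you invoke Theorem 59 plus an Euler-characteristic check, but this is the same machinery). The proposal is correct.
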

\begin{proof} By construction, the map $i_{Y_{K1}}\oplus i_{Y_{K2}}:H_1(\Sigma_2)\rightarrow H_1(Y_K)\oplus H_1(Y_K)$ is an isomorphism. Hence $H_1(X_K)$, which is isomorphic to the cokernel of this map, vanishes. Similarly, $i_{Y_{K1}}^*+ i_{Y_{K2}}^*:H^1(Y_K)\oplus H^1(Y_K)\rightarrow H^1(\Sigma_2)$ is an isomorphism. Therefore, rim tori do not exist in the fibre sum $X_K$. Finally, the claim that $b_2(X_K)=2$ follows from \cite[Corollary 40]{H}.
\end{proof}
This lemma has also been proved in \cite{Ak}. We can describe the splitting of the second homology of $X_K$ adapted to the fibre sum as follows. The second homology of the first copy of $Y_K$ splits as
\begin{equation*}
H_2(Y_K)=\mathbb{Z}\Sigma\oplus \mathbb{Z}T_{Y_K}.
\end{equation*}
and similarly for the second copy. The push-offs of the surfaces $\Sigma$ in the first and second copy determine a symplectic surface $\Sigma_{X_K}$ of genus 2 and self-intersection zero in $X_K$. The punctured tori $T_{Y_K}$ in the first and second copy of $Y_K$ sew together to determine a surface $B_{X_K}$ of genus 2 and self-intersection zero in $X_K$. By the Gompf construction we can assume that $B_{X_K}$ is symplectic. Since rim tori and vanishing classes do not exist in $Y_K$, we have
\begin{equation*}
H_2(X_K)=\mathbb{Z}\Sigma_{X_K}\oplus \mathbb{Z}B_{X_K}.
\end{equation*}
The intersection form in this basis is given by 
\begin{equation*}
Q_{X_K}=\left(\begin{array}{cc} 0 &1 \\ 1& 0 \\ \end{array}\right)
\end{equation*}
hence $X_K$ is a homology $S^2\times S^2$.
\begin{lem}\label{lem K_{X_K}} The canonical class of the symplectic manifold $X_K$ is given by $K_{X_K}=2\Sigma_{X_K}+2B_{X_K}$.
\end{lem}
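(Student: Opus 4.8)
The plan is to proceed exactly as in the proofs of Lemma \ref{lem K knot surgery} and Lemma \ref{lem K_{Y_K}}: read off the canonical class from the adjunction formula applied to the two symplectic generators of $H_2(X_K)$. Since $X_K$ is symplectic and, by the construction just described, both $\Sigma_{X_K}$ (a push-off of the symplectic surfaces $\Sigma$) and $B_{X_K}$ (symplectic by the Gompf construction) are symplectically embedded surfaces of genus $2$ with self-intersection $0$, and since they form a basis of $H_2(X_K)$ on which the intersection form is the unimodular hyperbolic form, I would write $K_{X_K}=a\,\Sigma_{X_K}+b\,B_{X_K}$ and determine $a,b$ from adjunction.

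Concretely, applying $2g-2=C^2+K_{X_K}\cdot C$ with $C=\Sigma_{X_K}$ and using $\Sigma_{X_K}^2=0$, $\Sigma_{X_K}\cdot B_{X_K}=1$ gives $2=b$; applying it with $C=B_{X_K}$ and using $B_{X_K}^2=0$ gives $2=a$. Hence $K_{X_K}=2\Sigma_{X_K}+2B_{X_K}$. As a consistency check one computes $K_{X_K}^2=8\,\Sigma_{X_K}\cdot B_{X_K}=8$, in agreement with $2e(X_K)+3\sigma(X_K)=8$ for the homology $S^2\times S^2$ manifold $X_K$.

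Alternatively, the same formula follows from the gluing formula for canonical classes under generalized fibre sums \cite[Theorem 89]{H}, applied to $X_K=Y_K\#_\psi Y_K$ along the genus-$2$ surface $\Sigma$ (so $g=2$), taking the distinguished dual surface on each side to be $T_{Y_K}$ and inserting $K_{Y_K}=2T_{Y_K}$ from Lemma \ref{lem K_{Y_K}} together with $T_{Y_K}^2=0$ and $K_{Y_K}\cdot T_{Y_K}=0$. Then the $P$-part vanishes because $P(Y_K)=0$ (indeed $K_{Y_K}-(2g-2)T_{Y_K}=0$), the coefficient of $B_{X_K}$ is $2g-2=2$, and the coefficient of $\Sigma_{X_K}$ is $\eta+\eta'$ with $\eta=\eta'=K_{Y_K}\cdot T_{Y_K}+1-(2g-2)T_{Y_K}^2=1$, once more giving $K_{X_K}=2\Sigma_{X_K}+2B_{X_K}$.

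There is essentially no obstacle here; the only points that require care are confirming that both homology generators are genuinely represented by symplectic surfaces, so that adjunction holds with equality rather than merely as an inequality, and keeping the sign convention of the adjunction formula straight so that the canonical class, and not its negative, is produced.
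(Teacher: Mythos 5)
Your primary argument is exactly the paper's proof: apply the adjunction formula to the two symplectic genus-$2$ generators $\Sigma_{X_K}$ and $B_{X_K}$ of $H_2(X_K)$ and solve for the coefficients using the hyperbolic intersection form. The computation and the consistency check $K_{X_K}^2=8=2e(X_K)+3\sigma(X_K)$ are both correct, so the proposal is fine.
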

\begin{proof} This follows from the adjunction formula since both surfaces $\Sigma_{X_K}$ and $B_{X_K}$ are symplectic.
\end{proof}

We choose embeddings of a reference surface of genus $2$
\begin{align*}
i_{R}:\Sigma_2&\rightarrow R\\
i_{X_K}:\Sigma_2&\rightarrow X_K
\end{align*}
whose images are the surfaces $\Sigma''$ and $\Sigma_{X_K}$ and which map the standard generators of $H_1(\Sigma_2)$ to $\{\alpha_1,\alpha_2,\alpha_3,\alpha_4\}$ and $\{0,0,0,0\}$, respectively.  
\begin{defn} Let $V$ denote the symplectic fibre sum $R\#_\phi X_K$, where the diffeomorphism $\phi:\Sigma''\rightarrow \Sigma_{X_K}$ is given by $i_{X_K}\circ i_{R}^{-1}$. 
\end{defn}
\begin{prop} Rim tori do not exist in the fibre sum $V$. The 4-manifold $V$ is a homology $3\mathbb{C}P^2\#5\overline{\mathbb{C}P^2}$.
\end{prop}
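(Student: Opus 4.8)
The plan is to run, with the obvious changes, the same argument that proved the corresponding proposition for $U$, invoking the structural results of \cite{H}. First I would compute $H_1(V)$: by \cite[Corollary 45]{H} this group is isomorphic to the cokernel of the map
\[
i_R\oplus i_{X_K}\colon H_1(\Sigma_2)\longrightarrow H_1(R)\oplus H_1(X_K).
\]
Since $H_1(X_K)=0$ and $i_R$ carries the standard generators of $H_1(\Sigma_2)$ to the basis $\alpha_1,\alpha_2,\alpha_3,\alpha_4$ of $H_1(R)=H_1(T^4)$, this map is an isomorphism, and hence $H_1(V)=0$.

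Next I would rule out rim tori. By \cite[Theorem 51]{H} the subgroup of rim tori in $H_2(V)$ is isomorphic to the cokernel of
\[
i_R^*+i_{X_K}^*\colon H^1(R)\oplus H^1(X_K)\longrightarrow H^1(\Sigma_2).
\]
Here $H^1(X_K)=0$ and $i_R^*$ is the isomorphism dual to $i_R$, so the map is surjective and its cokernel vanishes; therefore $V$ contains no rim tori.

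Finally, the formulae of \cite[Corollary 40]{H} express $b_2^+(V)$ and $b_2^-(V)$ in terms of the corresponding data for $R$ and $X_K$. Feeding in $b_2^+(R)=3$, $b_2^-(R)=5$ (from $R=T^4\#2\overline{\mathbb{C}P^2}$) and $b_2^+(X_K)=b_2^-(X_K)=1$ (since $X_K$ is a homology $S^2\times S^2$) yields $b_2^+(V)=3$ and $b_2^-(V)=5$, so $b_2(V)=8$ and $\sigma(V)=-2$. Together with $H_1(V)=0$—which by Poincar\'e duality and the universal coefficient theorem forces $H_2(V)$ to be free and $H_3(V)=0$—and the fact that a unimodular form of rank $8$ and signature $-2$ is necessarily indefinite, hence odd (an indefinite even form has signature divisible by $8$), hence isometric to $3\langle 1\rangle \oplus 5\langle -1\rangle$, this identifies the integral homology and intersection form of $V$ with those of $3\mathbb{C}P^2\#5\overline{\mathbb{C}P^2}$.

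The only point requiring attention is checking that the hypotheses of the quoted theorems of \cite{H} actually hold for this fibre sum: that $\Sigma''\subset R$ and $\Sigma_{X_K}\subset X_K$ are embedded with self-intersection zero, carry the framings used in \cite{H}, and that the meridians to these surfaces vanish in the complements of their tubular neighbourhoods. All of this was arranged in the construction of $V$—for instance $\Sigma_{X_K}$ pairs to $1$ with $B_{X_K}$, and $\Sigma''$ is obtained by resolving and blowing up the two coordinate tori of $T^4$—so I expect no genuine obstacle. The computation is formally identical to the one for $U$; the only structural difference is that here $i_R$ is the isomorphism on first homology while $i_{X_K}$ is the zero map, and $X_K$ is already homologically simply connected, which is exactly why $V$ comes out with vanishing $H_1$ and no rim tori without any further work.
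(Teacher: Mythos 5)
Your proposal is correct and follows essentially the same route as the paper: computing $H_1(V)$ as the cokernel of $i_R\oplus i_{X_K}$ via \cite[Corollary 45]{H}, ruling out rim tori via the cokernel of $i_R^*+i_{X_K}^*$ from \cite[Theorem 51]{H}, and obtaining $b_2^\pm(V)$ from \cite[Corollary 40]{H}. The extra remarks on the classification of indefinite unimodular forms and on verifying the hypotheses of the quoted theorems are sound but not needed beyond what the paper records.
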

\begin{proof} By construction, the map $i_{R}\oplus i_{X_K}:H_1(\Sigma_2)\rightarrow H_1(R)\oplus H_1(X_K)=H_1(R)$ is an isomorphism. Hence $H_1(V)$, which is isomorphic to the cokernel of this map, vanishes. Similarly, $i_{R}^*+ i_{X_K}^*:H^1(R)\oplus H^1(X_K)=H^1(R)\rightarrow H^1(\Sigma_2)$ is an isomorphism. Therefore, rim tori do not exist in the fibre sum $V$. Finally, the claim that $b_2^+(V)=3$ and $b_2^-(V)=5$ follows again from \cite[Corollary 40]{H}.
\end{proof}
In \cite{AkP} it is shown that $V$ is simply-connected and irreducible. Hence the manifold $V$ is an exotic $3\mathbb{C}P^2\#5\overline{\mathbb{C}P^2}$.

We describe the splitting of $H_2(V)$ adapted to the fibre sum. We first decompose the second homology of the manifold $R$ as
\begin{equation*}
H_2(R)=\mathbb{Z}\Sigma''\oplus \mathbb{Z}B_R\oplus P(R),
\end{equation*}
where $B_R$ is a surface in $R$ with $\Sigma''\cdot B_R=1$ and $P(R)$ denotes the orthogonal complement of $\mathbb{Z}\Sigma''\oplus \mathbb{Z}B_R$ with respect to the intersection form. We also have a decomposition
\begin{equation*}
H_2(X_K)=\mathbb{Z}\Sigma_{X_K}\oplus \mathbb{Z}B_{X_K},
\end{equation*}
where both $\Sigma_{X_K}$ and $B_{X_K}$ are surfaces of genus 2 and self-intersection zero. The push-offs of the surfaces $\Sigma''$ and $\Sigma_{X_K}$ determine a surface $\Sigma_V$ in $V$ of genus 2 and self-intersection $0$. The punctured surfaces $B_R$ and $B_{X_K}$ sew together to define a surface $B_V$ of genus equal to the genus of $B_R$ plus two. The surface $B_V$ has self-intersection $B_R^2$. Since rim tori and the dual vanishing classes do not exist in $V$, \cite[Theorem 59]{H} shows that
\begin{equation*}
H_2(V)= \mathbb{Z}\Sigma_V\oplus \mathbb{Z}B_V\oplus P(R).
\end{equation*}
The subgroup $P(R)$ is orthogonal to the first two summands. The restriction of the intersection form to $\mathbb{Z}\Sigma_V\oplus \mathbb{Z}B_V$ is of the form
\begin{equation*}
\left(\begin{array}{cc} 0 &1 \\ 1& B_R^2 \\ \end{array}\right)
\end{equation*}
and the intersection form on $P(R)$ is the one induced from $R$. There is again an isomorphism $H_2(V)\cong H_2(R)$ preserving the intersection form. 

We determine the canonical class of $V$, depending on the choice of the surface $B_R$. 
\begin{prop} Let $E_1,E_2$ denote the exceptional spheres in $R$. Then the canonical classes of the symplectic 4-manifolds $R$ and $V$ are given by
\begin{equation*}
K_R=E_1+E_2
\end{equation*}
and
\begin{equation*}
K_V=(4+K_RB_R-2B_R^2)\Sigma_V+2B_V+(K_R-2B_R-(K_RB_R-2B_R^2)\Sigma'').
\end{equation*}
In the formula for $K_V$ the term $K_R-2B_R-(K_RB_R-2B_R^2)\Sigma''$ is an element of $P(R)$.
\end{prop}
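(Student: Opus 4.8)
The plan is to run exactly the argument used for $K_Q$ and $K_U$ in the previous proposition, with $T^4$ in place of $S^1\times M_K$ and $X_K$ in place of $Y_K$. First I would prove $K_R=E_1+E_2$. Here $R=T^4\#2\overline{\mathbb{C}P^2}$ is obtained by symplectically blowing up $T^4=T^2\times T^2$ twice, and the standard symplectic form on $T^4$ has vanishing canonical class, $K_{T^4}=0$ (a complex $2$-torus has holomorphically trivial canonical bundle; one also sees from the adjunction formula $2g-2=(\Sigma_g)^2+K_{T^4}\Sigma_g$ that $K_{T^4}$ pairs trivially with each of the symplectic tori). The symplectic blow-up formula then gives $K_R=K_{T^4}+E_1+E_2=E_1+E_2$, just as for $K_Q$.

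Next I would invoke \cite[Theorem 89]{H} for the fibre sum $V=R\#_\phi X_K$. Its hypotheses — the absence of rim tori and of dual vanishing classes in $V$ — are furnished by the preceding proposition, so the theorem applies relative to the splitting $H_2(R)=\mathbb{Z}\Sigma''\oplus\mathbb{Z}B_R\oplus P(R)$ with the chosen surface $B_R$. The delicate point is the matching of roles: in the construction of $U$ the distinguished surface $B_Q$ sat in the second fibre-sum factor $Q$, while here $B_R$ sits in the first factor $R$; hence $R$ now plays the part that $Q$ played there, $X_K$ plays the part of $Y_K$, and the surface dual to $\Sigma_{X_K}$ inside $X_K$ is $B_{X_K}$. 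With $g=2$, so that $2g-2=2$, the theorem then yields
\begin{align*}
K_V&=\overline{K_R}+b_VB_V+(\eta_V+\eta'_V)\Sigma_V,\\
\overline{K_R}&=K_R-2B_R-(K_RB_R-2B_R^2)\Sigma''\in P(R),\\
b_V&=2,\\
\eta_V&=K_{X_K}B_{X_K}+1-2B_{X_K}^2,\\
\eta'_V&=K_RB_R+1-2B_R^2.
\end{align*}

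By Lemma \ref{lem K_{X_K}} we have $K_{X_K}=2\Sigma_{X_K}+2B_{X_K}$, and since $\Sigma_{X_K}\cdot B_{X_K}=1$ and $B_{X_K}^2=0$ this gives $K_{X_K}B_{X_K}=2$, hence $\eta_V=3$ and $\eta_V+\eta'_V=4+K_RB_R-2B_R^2$. Substituting back reproduces the stated formula for $K_V$. For the last sentence of the proposition I would simply check that $\overline{K_R}=K_R-2B_R-(K_RB_R-2B_R^2)\Sigma''$ is orthogonal to $\Sigma''$ and to $B_R$: using $(\Sigma'')^2=0$, $\Sigma''\cdot B_R=1$ and the adjunction identity $K_R\Sigma''=2g-2=2$ (valid because $\Sigma''$ is symplectic of genus $2$ with $(\Sigma'')^2=0$), one gets $\overline{K_R}\cdot\Sigma''=0$ and $\overline{K_R}\cdot B_R=0$, so $\overline{K_R}\in P(R)$.

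The main obstacle is this bookkeeping step in the application of \cite[Theorem 89]{H}: the theorem is not symmetric in its two summands, since the distinguished surface is selected on one side only, so one must take care that $B_R$ is chosen in the $R$-factor — the side opposite to the one used for $U$ — and that $\eta_V$ is the contribution attached to $X_K$, not to $R$. Once the roles are assigned correctly, everything else is the one-line substitution above using Lemma \ref{lem K_{X_K}} together with $\Sigma_{X_K}\cdot B_{X_K}=1$ and $B_{X_K}^2=0$.
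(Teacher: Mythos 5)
Your proposal is correct and follows essentially the same route as the paper: $K_R=K_{T^4}+E_1+E_2=E_1+E_2$ from $K_{T^4}=0$, then \cite[Theorem 89]{H} applied to $V=R\#_\phi X_K$ with $g=2$ and $K_{X_K}=2\Sigma_{X_K}+2B_{X_K}$ from Lemma \ref{lem K_{X_K}}, giving $\eta_V+\eta'_V=4+K_RB_R-2B_R^2$ (your labels $\eta_V$ and $\eta'_V$ are interchanged relative to the paper's, but only their sum enters). Your explicit check that $\overline{K_R}$ is orthogonal to $\Sigma''$ and $B_R$, hence lies in $P(R)$, is a small addition the paper leaves implicit, and your bookkeeping of which factor carries the distinguished surface is consistent with the paper's application of the theorem.
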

\begin{proof} The first claim follows because $K_{T^4}=0$. According to \cite[Theorem 89]{H}, the canonical class of $V$ is given by
\begin{equation*}
K_V=\overline{K_R}+b_VB_V+(\eta_V+\eta'_V)\Sigma_V,
\end{equation*}
where
\begin{align*}
\overline{K_R}&=K_R-(2g-2)B_R-(K_RB_R-(2g-2)B_R^2)\Sigma''\in P(R)\\
b_V&=2g-2\\
\eta_V&=K_RB_R+1-(2g-2)B_R^2\\
\eta'_V&=K_{X_K}B_{X_K}+1-(2g-2)B_{X_K}^2.
\end{align*}
In our case, $g=2$ and $K_{X_K}$ is given by Lemma \ref{lem K_{X_K}}. 
\end{proof}
For example, we can take as $B_R$ one of the torus factors in $T^4=T^2\times T^2$. Then $B_R^2=0$ and $K_RB_R=0$, hence
\begin{equation*}
K_V=4\Sigma_V+2B_V+(K_R-2B_R).
\end{equation*}
With the formula for the intersection form we have $K_V^2=14$, as expected.

\begin{rem} In \cite{Ak2}, A.~Akhmedov constructed irreducible
  symplectic 4-manifolds $Y$ and $X$ homeomorphic to
  $\mathbb{C}P^2\#5\overline{\mathbb{C}P^2}$ and
  $3\mathbb{C}P^2\#7\overline{\mathbb{C}P^2}$ using generalized fibre
  sums. The building blocks of $X$ and $Y$ are the manifolds $X_K$ and
  $Y_K$ and $Z=T^2\times S^2\#4\overline{\mathbb{C}P^2}$. The manifold
  $Z$ admits a Lefschetz fibration with fibres of genus $2$. Let
  $\Sigma_2'$ denote a regular fibre and $a_1,b_1$ the generators of
  $H_1(Z)$ in the notation of \cite{Ak2}. Then the inclusion induced
  homomorphism maps the standard generators of $H_1(\Sigma_2')$ to
  $\{a_1,b_1,-a_1,-b_1\}$ in that particular order.

The manifold $Y$ is obtained as a generalized fibre sum of $Y_K$ and
$Z$. Using similar arguments as before one can show that rim tori do
not exist in the fibre sum $Y$ and calculate the canonical class. The
manifold $X$, homeomorphic to
$3\mathbb{C}P^2\#7\overline{\mathbb{C}P^2}$ and obtained as a fibre
sum of $X_K$ and $Z$, however, does contain rim
tori. The subgroup of rim tori in the second homology of $X$ is given
by the cokernel of the inclusion induced homomorphism
$H^1(Z)\rightarrow H^1(\Sigma_2)$ and hence is isomorphic to
$\mathbb{Z}^2$. There also exists a dual subgroup of vanishing
classes. If $R(X)$ and $S'(X)$ denote the groups of rim tori and
vanishing classes, then in a similar way as before 
\begin{equation*}
H_2(X)=\mathbb{Z}\Sigma_X\oplus\mathbb{Z}B_X\oplus P(Z)\oplus
R(X)\oplus S'(X).
\end{equation*}
In this case the canonical class of $X$ contains a rim tori
contribution that depends on the choice of covering diffeomorphism
$\Phi$ used in the construction of the fibre sum. See \cite[Theorem
89]{H} for the general formula.  

\end{rem}

\bibliographystyle{amsplain}

\bigskip
\bigskip

\end{document}